\let\@firsthead\empty\makeatother
\newtheorem{thm}{Theorem}
\newtheorem{lem}{Lemma}
\newtheorem{example}{Example}
\def\O{\mathrm{O}}
\def\sgn{\operatorname{sgn}}
\def\diag{\operatorname{diag}}
\def\bigtimes{\mathop{\hbox{\Large$\times$}}}
\def\testb#1{\testb@i#1,,\@nil}%
\def\testb@i#1,#2,#3\@nil{%
  \draw[->] (O) --++(#1);
  \ifx\relax#2\relax\else\testb@i#2,#3\@nil\fi}
\newcommand{\makediag}[1]{
    \coordinate (O) at (0,0); \coordinate (N) at (0,0.8);
    \coordinate (NE) at (0.8,0.8); \coordinate (E) at (0.8,0);
    \coordinate (SE) at (0.8,-0.8); \coordinate (S) at (0,-0.8);
    \coordinate (SW) at (-0.8,-0.8);\coordinate (W) at (-0.8,0);
    \coordinate (NW) at (-0.8,0.8); \coordinate (B1) at (1.2,1.2);
    \coordinate (B2) at (-1.2,-1.2);
    \testb{#1}
}
\newcommand{\diagr}[1]{
  \begin{tikzpicture}[baseline=-3pt,scale=0.3]\makediag{#1}\end{tikzpicture}
}
\title{Inhomogeneous Restricted Lattice Walks}
\author{Manfred Buchacher\addressmark{1} \and Manuel Kauers \addressmark{1}\thanks{Both authors were supported by the Austrian FWF grant F5004.}}
\address{\addressmark{1}Institute for Algebra, Johannes Kepler University, Linz, Austria}
\abstract{%
  We consider inhomogeneous lattice walk models in a half-space and in the quarter plane.
  For the models in a half-space, we show by a generalization of the kernel method to
  linear systems of functional equations that their generating functions are always algebraic.
  For the models in the quarter plane, we have carried out an experimental classification of 
  all models with small steps. We discovered many (apparently) D-finite cases for most of
  which we have no explanation yet.
}
\keywords{Functional Equations, Kernel Method, Formal Power Series, D-finiteness}
\begin{document}

\let\set\mathbb

\maketitle

\section{Introduction}

Given a specific counting problem, it is often easy to write down a functional equation for the corresponding
generating function, but it can be quite hard to derive from it something interesting about its solution.
Counting problems for restricted lattice walks are a source of functional equations which are neither trivial nor hopeless,
and therefore interesting. 
Using the kernel method, Banderier and Flajolet~\cite{banderier02,flajolet09} have shown that the generating functions for
lattice walks restricted to a half-space are always algebraic.
For walks restricted to the quarter plane, the functional equations are more intricate, and as a consequence, the
resulting generating functions come in more flavours: algebraic, transcendentally D-finite, non-D-finite but ADE,
not even ADE. A systematic classification was initiated in a seminal paper by Bousquet-M\'elou and Mishna~\cite{bousquet10} and
has since led to a substantial amount of literature, see~\cite{bostan17} and the comprehensive list of references given there,
as well as \cite{bernardi17,dreyfus18} for some more recent developments. 

Since the main questions raised in the paper of Bousquet-M\'elou and Mishna have been answered, the focus has shifted to
modified versions of the problem, including, for example, weighted steps~\cite{kauers15c,courtiel17}, longer
steps~\cite{melczer18}, higher dimensions~\cite{bostan16a,bacher16,bogosel18}, or walks in regions with interacting
boundaries~\cite{beaton18}. All these variations are homogeneous in the sense that the walks are formed from a fixed set
of admissible steps. Little is known about inhomogeneous models, i.e., lattice walk models where the set
of admissible steps may vary with the time and/or the position of the walk. D'Arco et al.~\cite{darco16} have studied such
a case. They determined the asymptotics for walks in the quarter plane where at position $(i,j)$, the next step is taken
from \diagr{W,N,S,E} if $i+j$ is even and from \diagr{W,N,S,E,NW,NE,SW,SE} if $i+j$ is odd. 
In the present paper, we want to study such inhomogeneous models more systematically. 

We consider half-space models and quarter plane models.
For the half-space, we show for a general class of inhomogeneities that the generating function is invariably algebraic.
This generalizes the classical result of Banderier and Flajolet mentioned above.
For the proof, we adapt an argument given by Bousquet-Mel\'ou and Jehanne~\cite{bousquet06} to the case of linear systems
of functional equations (Section~\ref{sec:3}). This result may also be useful in other contexts.
For regions defined by more than one restriction, we generalize the notion of dimension
introduced by Bostan et al.~\cite{bostan16a} to inhomogeneous models (Section~\ref{sec:4a}).
For the quarter plane (Section~\ref{sec:4}), we give an experimental classification for models with short steps and
two specific inhomogeneities. We recognize many generating functions as D-finite,
but in most cases we have no explanation for their D-finiteness. 




\section{Inhomogeneous Lattice Walks in the Half-Space}\label{sec:2}

We consider walks in a half-space $\set Z_{\geq0}\times\set Z^{d-1}$ which start at some point $(i_0,\mathbf j_0)$ of the half-space,
consist of $n$ steps, and end at some point $(i,\mathbf j)$ of the half-space.
Restrictions are imposed on the steps a walk may contain.
For a \emph{homogeneous} model, there is a fixed finite set $\mathbf{S}\subseteq\set Z^d$, called
the step set, and every step of the walk must be taken from this step set.
For an \emph{inhomogeneous} model, there are several step sets, and it may depend on the current
position $(i,\mathbf j)$ or on the number $n$ of steps taken so far from which step set the next
step must be chosen. 

We consider inhomogeneities which can be
described as follows. Let $k\geq 1$, let $\mathbf{p}=(p_1,\dots,p_k)$ be a
vector of polynomials in $\mathbb{Z}[i,j_1,\dots,j_{d-1},n]$ of total degree at most one, and
let $\mathbf{m}=(m_1,\dots,m_k)$ be a vector of positive integers. For every vector
$\mathbf{r}\in\bigtimes_{q=1}^k\{0,\dots,m_q-1\}$, let
$\mathbf{S}_{\mathbf{r}}$ be a finite subset of~$\set Z^d$. We consider walks
in the half-space $\set Z_{\geq0}\times\set Z^{d-1}$ starting at $(i_0,\mathbf j_0)$ and consisting
of steps taken from $\bigcup_{\mathbf{r}} \mathbf{S}_{\mathbf{r}}$ with the
restriction that whenever the position of the walk after $n$ steps is $(i,\mathbf j)$,
the $n+1$st step must be taken from $\mathbf{S}_{\mathbf{p}(i,\mathbf j,n)\bmod\mathbf{m}}$,
where the evaluation at $i,\mathbf j,n$ and the $\bmod$-operation in the index are meant
component-wise.

\begin{example}\label{0}
  Let $\mathbf{S}_0=\diagr{N,E,S,W}$
  and $\mathbf{S}_1=\diagr{N,E,S,W,SW,SE,NW,NE}$, 
  and consider walks in $\set Z_{\geq0}\times \set Z$ starting at $(0,0)$
  and consisting of steps taken from $\mathbf{S}_0\cup\mathbf{S}_1$
  with the restriction that whenever the current position of a walk is $(i,j)$,
  the next step must be taken from $\mathbf{S}_{i+j\bmod 2}$.
  With the terminology introduced above, we have $k=1$, $\mathbf p=i+j$, and $\mathbf m=2$.
\end{example}

When the step set depends on the current position $(i,\mathbf j)$, as in
Example~\ref{0}, we call the model \emph{space inhomogeneous.}  In the other
extreme, when the step set only depends on the time~$n$, the model would be
called \emph{time inhomogeneous.}  In general, the step set may depend on both
time and position.

Fix an inhomogeneous lattice walk model as introduced above, and for each
$\mathbf r$ fix a function $w_{\mathbf r}\colon\mathbf S_{\mathbf r}\to\set K$,
where $\set K$ is a field of characteristic zero.
We call $w_{\mathbf r}(u,\mathbf v)$ the \emph{weight}
of the step $(u,\mathbf v)\in\mathbf S_{\mathbf r}$. The weight of a walk is the product
of the weights of the steps it consists of. Note that the same step $(u,\mathbf v)$ may
have different weights depending on the time and the position at which it is taken.

For any $i,\mathbf j,n$, let $f_{i,\mathbf j,n}$ be the sum of the weights of all walks from $(i_0,\mathbf j_0)$
to $(i,\mathbf j)$ of length~$n$, and let $F(x,\mathbf y,t)=\sum_{i,\mathbf j,n}f_{i,\mathbf j,n}x^i\mathbf y^{\mathbf j}t^n\in\set K[x,\mathbf y,\mathbf y^{-1}][[t]]$ be the
corresponding generating function, where we write $\mathbf y$ and $\mathbf y^{-1}$
for $y_1,\dots,y_{d-1}$ and $y_1^{-1},\dots,y_{d-1}^{-1}$, respectively, and $\mathbf y^{\mathbf j}$
for $y_1^{j_1}\cdots y_{d-1}^{j_d}$.
Note that $f_{i,\mathbf j,n}$ is just the total number of walks in the model if all weights
are defined as~$1$. 
We have $F=\sum_{\mathbf{r}} F_{\mathbf{r}}$, where for each $\mathbf{r}$
we define $F_{\mathbf{r}}$ as the series consisting of all terms $f_{i,\mathbf j,n}x^i\mathbf y^{\mathbf j}t^n$
of $F$ for which $\mathbf{p}(i,\mathbf j,n)=\mathbf{r}\bmod\mathbf{m}$.

\begin{example}
  Continuing the previous example, $F_0$ is the generating function counting walks ending
  at a point $(i,j)$ with $i+j$ even and $F_1$ is the generating function counting walks
  ending at a point $(i,j)$ with $i+j$ odd.
  The full generating function clearly is $F=F_0+F_1$. 
\end{example}

For every $\mathbf s\in\bigtimes_{q=1}^k\{0,\dots,m_q-1\}$, define
$\mathbf S_{\mathbf{r}}^{\mathbf{s}}\subseteq\mathbf S_{\mathbf r}$ as the set
of all steps $(u,\mathbf v)\in\mathbf S_{\mathbf r}$ which are such that whenever we are at
position $(i,\mathbf j)$ at time~$n$, for some $i,\mathbf j,n$ with
$\mathbf{p}(i,\mathbf j,n)=\mathbf{r}\bmod\mathbf{m}$, taking the step $(u,\mathbf v)$ will
bring us to a position where the next step has to be selected from
$\mathbf{S}_{\mathbf s}$, i.e., such that
$\mathbf{p}(i+u,\mathbf j+\mathbf v,n+1)=\mathbf{s}\bmod\mathbf{m}$.  Note that $\mathbf
S_{\mathbf r}^{\mathbf s}$ is well-defined in the sense that it does not depend
on $(i,\mathbf j,n)$, for whenever $(i',\mathbf j',n')$ is such that
$\mathbf{p}(i,\mathbf j,n)=\mathbf{p}(i',\mathbf j',n')\bmod\mathbf m$, then also
$\mathbf{p}(i+u,\mathbf j+\mathbf v,n+1)=\mathbf{p}(i'+u,\mathbf j'+\mathbf v,n'+1)\bmod\mathbf m$ for any
$u\in\set Z$ and $\mathbf v\in\set Z^{d-1}$,
because the components of $\mathbf{p}$ are polynomials of total degree
at most~$1$.

The definition of the subsets $\mathbf S_{\mathbf r}^{\mathbf s}$ is made in
such a way that whenever we have a walk of some length $n$ ending at some
position $(i,\mathbf j)$, with $\mathbf{p}(i,\mathbf j,n)=\mathbf r\bmod\mathbf m$, then appending
any step $(u,\mathbf v)\in\mathbf S_{\mathbf r}^{\mathbf s}$ to the
walk gives a new walk of length $n+1$ which ends at position $(i+u,\mathbf j+\mathbf v)$. For the length
and endpoint of the extended walk, we have $\mathbf{p}(i+u,\mathbf j+\mathbf v,n+1)=\mathbf
s\bmod\mathbf m$. In order to respect the boundary condition, only steps
$(u,\mathbf v)\in\mathbf S_{\mathbf r}^{\mathbf s}$ with $i+u\geq0$ can be used.

Writing $S_{\mathbf r}^{\mathbf s}:=\sum_{(u,\mathbf v)\in\mathbf{S}_{\mathbf r}^{\mathbf s}}
w_{\mathbf r}(u,\mathbf v)x^u \mathbf y^{\mathbf v}$,
the combinatorial specification, including the boundary condition, translates into the system
of functional equations
\[
\forall\ \mathbf s: \ 
F_{\mathbf s} = [\mathbf p(i_0,\mathbf j_0,0)=\mathbf s\bmod\mathbf m]
+ t \sum_{\mathbf r} ([x^{\geq0}]S_{\mathbf r}^{\mathbf s}) F_{\mathbf r}
+ t \sum_{k\geq 1} \sum_{\mathbf r} x^{-k} ([x^{-k}] S_{\mathbf r}^{\mathbf s}) ([x^{\geq k}] F_{\mathbf r}),
\]
where $[x^{\geq0}]S_{\mathbf r}^{\mathbf s}\in\set K[x,\mathbf y,\mathbf y^{-1}]$ is the Laurent polynomial
obtained from $S_{\mathbf r}^{\mathbf s}$ by discarding all terms with negative exponents in~$x$,
where $[x^{-k}] S_{\mathbf r}^{\mathbf s}\in\set K[\mathbf y,\mathbf y^{-1}]$ is the coefficient of $x^{-k}$ of
$S_{\mathbf r}^{\mathbf s}\in\set K[x,x^{-1},\mathbf y,\mathbf y^{-1}]$, and where
$[x^{\geq k}] F_{\mathbf r}$ denotes the series obtained from $F_{\mathbf r}$ by discarding
all terms $x^i \mathbf y^{\mathbf j} t^n$ with $i<k$.
The functional equation can also be expressed using the $\Delta$ operator defined
by $\Delta F(x,\mathbf y,t) := (F(x,\mathbf y,t)-F(0,\mathbf y,t))/x$.
Note that $[x^{\geq k}] F = x^k\Delta^k F$ for all~$k\in\set N$,
so $x^{-k} ([x^{-k}] S_{\mathbf r}^{\mathbf s}) ([x^{\geq k}] F_{\mathbf r})$
simplifies to $([x^{-k}] S_{\mathbf r}^{\mathbf s})\Delta^k F_{\mathbf r}$.

\begin{example}
  Continuing the previous example and taking all weights to be~$1$,
  we have
  \begin{alignat}1
    F_0(x,y) &= 1 + t (y + y^{-1} + x)F_1(x,y) + t x^{-1} (F_1(x,y) - F_1(0,y)),\label{eq:ex1a} \\
    F_1(x,y) &= t (y + y^{-1} + x)F_0(x,y) + tx(y + y^{-1})F_1(x,y) \notag\\
    &\quad{}+ t x^{-1}(F_0(x,y) - F_0(0,y)) + tx^{-1} (y + y^{-1})(F_1(x,y) - F_1(0, y)).\notag
  \end{alignat}
  Eliminating $F_0(x,y)$ from these equations leads to the equation
  \begin{equation}\label{eq:ex1c}
    \begin{array}{l}
      \bigl(x^2y^2 - t^2(x+y)^2(1+x y)^2 - t x (1+x^2)y(1+y^2)\bigr) F_1(x,y)\\
      =t x y (x+y)(1+x y) - t x y^2 F_0(0,y) - t y (x (1+y^2) + t(x+y)(1+x y)) F_1(0,y).
    \end{array}
  \end{equation}
  which we can solve using the kernel method: the polynomial
  $x^2y^2 - t^2(x+y)^2(1+x y)^2 - t x (1+x^2)y(1+y^2)\in\set Q(y)[[t]][x]$ has two roots in $\overline{\set Q(y)}[[t]]$,
  and if we substitute them for $x$ in equation~\eqref{eq:ex1c}, we get a system of two linear equations for the two unknown
  series $F_0(0,y)$ and $F_1(0,y)$. 
  This system turns out to have a unique solution, which implies that $F_0(0,y)$ and $F_1(0,y)$ are algebraic.
  Consequently, by equation~\eqref{eq:ex1c}, also $F_1(x,y)$ is algebraic.
  Consequently, by equation~\eqref{eq:ex1a}, also $F_0(x,y)$ is algebraic.
  Finally, it follows by algebraic closure properties that $F(x,y)=F_0(x,y)+F_1(x,y)$ is algebraic.
\end{example}

For any particular choice of $\mathbf p$ and $\mathbf m$ and any particular choice of step sets
$\mathbf{S}_{\mathbf{r}}$, we can write down an explicit system of functional equations for the auxiliary
series $F_{\mathbf{r}}$, which we can attempt to solve as illustrated in the example above. 
Potentially, such an attempt could fail, for example because there are too few roots for
applying the kernel method, or because there are too many solutions of the linear system for the
evaluated auxiliary series $F_{\mathbf{r}}(0,\mathbf y)$. The next theorem says that no such problems arise. 

\begin{thm}\label{thm:1}
  The generating function $F(x,\mathbf y,t)\in\set K[x,\mathbf y,\mathbf y^{-1}][[t]]$
  for a model of inhomogeneous lattice walks and a choice of weight functions $w_\mathbf r$
  as specified above is algebraic over $\set K[x,\mathbf y,t]$. 
\end{thm}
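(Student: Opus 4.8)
The plan is to reduce the system of functional equations for the auxiliary series $F_{\mathbf r}$ to a situation governed by a single polynomial "kernel" and then invoke the Bousquet-M\'elou--Jehanne argument in its system form (promised in Section~\ref{sec:3}). First I would rewrite the system using the $\Delta$ operator: for each $\mathbf s$,
\[
  F_{\mathbf s} = [\mathbf p(i_0,\mathbf j_0,0)=\mathbf s\bmod\mathbf m]
  + t\sum_{\mathbf r} ([x^{\geq0}]S_{\mathbf r}^{\mathbf s})\,F_{\mathbf r}
  + t\sum_{\mathbf r}\sum_{k\geq1} ([x^{-k}]S_{\mathbf r}^{\mathbf s})\,\Delta^k F_{\mathbf r}.
\]
Since each $\mathbf S_{\mathbf r}^{\mathbf s}$ is finite, only finitely many $k$ occur, and the only ``unknown boundary terms'' are the evaluations $F_{\mathbf r}(0,\mathbf y,t)$ together with the finitely many $x$-derivatives / sections $[x^{<k}]F_{\mathbf r}$ that appear through $\Delta^k$. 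Collecting the $F_{\mathbf r}$ into a vector $\mathbf F$, the system has the matrix shape $K(x,\mathbf y,t)\,\mathbf F = \mathbf b(x,\mathbf y,t)$, where $K = I - t\,M(x,\mathbf y,t)$ for a matrix $M$ of Laurent polynomials and $\mathbf b$ is linear in the unknown boundary series; after clearing the denominator $x^{\max m_q}$ (or the largest negative $x$-power occurring), $K$ becomes a matrix of genuine polynomials in $x$.

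Next I would pass to the scalar kernel $\kappa := \det K \in \set K[x,\mathbf y,\mathbf y^{-1}][[t]]$, which equals $1$ modulo~$t$ and hence, viewed in $\set K(\mathbf y)[[t]][x]$ after clearing denominators, is a polynomial in $x$ of some positive degree whose number of ``small'' roots $X_1(\mathbf y,t),\dots$ in $\overline{\set K(\mathbf y)}[[t]]$ (those that are power series in $t$ vanishing at $t=0$, or more precisely lying in the appropriate Puiseux-series field) is exactly what is needed to match the number of unknown boundary quantities. By Cramer's rule, $\kappa\,F_{\mathbf s} = (\text{linear combination of the } \mathbf b\text{-entries})$, so substituting each small root $x = X_\ell$ kills the left-hand side and yields one equation among the finitely many unknowns $F_{\mathbf r}(0,\mathbf y,t)$ and their sections. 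The core of the argument is then exactly the generalization of Bousquet-M\'elou and Jehanne's method to linear systems established in Section~\ref{sec:3}: it guarantees both that there are enough such roots (a counting/Newton-polygon argument showing the number of small branches equals the number of unknowns) and that the resulting linear system over $\overline{\set K(\mathbf y)}$ has a unique solution, forcing each boundary series, hence each $F_{\mathbf r}$, hence $F = \sum_{\mathbf r} F_{\mathbf r}$, to be algebraic over $\set K(x,\mathbf y,t)$.

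The main obstacle I anticipate is the root-counting and the well-definedness of the ``substitute the small roots'' step in the system setting: one must show that $\det K$, as a polynomial in $x$ over the Puiseux-series field in $t$, really does have the right number of branches that are substitutable into the $F_{\mathbf r}$ (i.e.\ that lie in the domain of definition of the power series), and that these branches are distinct enough — or, if multiplicities occur, that one also differentiates the Cramer relations to extract additional equations, as in the classical treatment. This is where the degree-$\le 1$ hypothesis on $\mathbf p$ is essential, because it is what makes $\mathbf S_{\mathbf r}^{\mathbf s}$ well defined and keeps the matrix $M$ independent of $(i,\mathbf j,n)$; I would lean on Section~\ref{sec:3} for the abstract statement and here only verify that the present system satisfies its hypotheses (polynomial kernel, boundary terms entering linearly, correct count of unknowns versus small roots). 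Once that is in place, the algebraicity of $F$ follows from standard closure properties of algebraic functions under the rational operations used to recover $F_1, F_0, \dots$ from the boundary data, exactly as in the two-dimensional example worked out above.
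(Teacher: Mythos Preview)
Your proposal is correct and follows essentially the same route as the paper: rewrite the functional equations as a vector equation of the form $\mathbf f=\mathbf a+t\sum_i\mathbf B_i\Delta^i\mathbf f$ over $\set K(\mathbf y)$, check that the hypotheses of the system-level kernel result from Section~\ref{sec:3} are met, and conclude that each $F_{\mathbf r}$, hence $F=\sum_{\mathbf r}F_{\mathbf r}$, is algebraic. The middle portion of your write-up---passing to $\kappa=\det K$, Cramer's rule, counting small roots, worrying about multiplicities---is not part of the proof of Theorem~\ref{thm:1} itself but rather a sketch of how Theorem~\ref{thm:main} is proved; since you ultimately defer to Section~\ref{sec:3}, that material is redundant here (and, incidentally, the paper's Section~\ref{sec:3} uses left-kernel vectors of $K$ at its singular values of $x$ rather than Cramer's rule, and handles the possible degeneracy of the linear system by a perturbation argument rather than by differentiating at multiple roots).
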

\begin{proof}
  As argued above, for every $\mathbf s$ there is a functional equation $F_{\mathbf s}=\cdots$,
  where the right hand side is a $\set K(\mathbf y)[x,t]$-linear combination of $1$ and
  the series $\Delta^i F_{\mathbf r}$. These equations together
  form a system of functional equations which can be written as
  \[
    \mathbf f = \mathbf a + t \sum_{i=0}^k \mathbf B_i \Delta^i\mathbf f,
  \]
  where $\mathbf f$ is the vector of the $F_{\mathbf s}$'s, where
  $\mathbf a$ is a certain explicit vector, and where the $\mathbf B_i$'s are certain explicit matrices
  with entries in $\set K(\mathbf y)[x,t]$.
  According to Theorem~\ref{thm:main} shown in the next section
  (applied with $\set K(\mathbf y)$ in place of~$\set K$),
  the unique solution vector of such a system always has algebraic components.
  This means that every $F_{\mathbf s}$ is algebraic,
  and consequently, the finite sum $F=\sum_{\mathbf r}F_{\mathbf r}$ is algebraic too.
\end{proof}

\section{Systems of Linear Functional Equations}\label{sec:3}

The purpose of the present section is to prove the following theorem,
which says that the solutions of certain systems of linear
functional equations are always algebraic.
Throughout this section, let $\set K$ be a field of characteristic zero.
Recall from the previous section that $\Delta\colon\set K[x][[t]]\to\set K[x][[t]]$
is defined by $\Delta f(x,t) = \frac1x(f(x,t)-f(0,t))$.
Applied to a vector of series, the action of $\Delta$ is meant componentwise. 

\begin{thm}\label{thm:main}
  Let $\mathbf{a}\in\mathbb{K}[x][t]^n$ and $\mathbf{B}_i\in\mathbb{K}[x][t]^{n\times n}$
  ($i=0,\dots,k$). Then the functional equation
  \begin{equation}\label{eq:fun}
    \mathbf{f} = \mathbf{a} + t \sum_{i=0}^k \mathbf{B}_i \Delta^i \mathbf{f}
  \end{equation}
  has a unique solution $\mathbf{f}$ in $\mathbb{K}[x][[t]]^n$, and its components
  are algebraic over $\mathbb{K}[x,t]$.
\end{thm}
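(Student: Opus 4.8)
The plan is to adapt the Bousquet-Mélou--Jehanne argument \cite{bousquet06} for a single algebraic equation to the setting of a linear \emph{system}. Existence and uniqueness of $\mathbf f$ in $\set K[x][[t]]^n$ is the easy part: since each entry of $t\mathbf B_i$ is divisible by $t$ and $\Delta$ does not lower the $t$-order, equation~\eqref{eq:fun} can be solved coefficient by coefficient in~$t$, so $\mathbf f$ is uniquely determined and lies in $\set K[x][[t]]^n$. The substance is algebraicity. First I would clear denominators and rewrite~\eqref{eq:fun} as a polynomial functional equation. Writing $\Delta^i\mathbf f$ in terms of $\mathbf f(x,t)$ and the ``boundary'' vectors $\mathbf f(0,t),(\partial_x\mathbf f)(0,t),\dots$ — note that $x^i\Delta^i f = f(x,t) - \sum_{j<i} x^j \tfrac{1}{j!}(\partial_x^j f)(0,t)$ — and multiplying through by $x^k$, the system takes the shape
\[
  \mathbf M(x,t)\,\mathbf f(x,t) = \mathbf c(x,t) + \mathbf N(x,t)\,\mathbf g(t),
\]
where $\mathbf M\in\set K[x,t]^{n\times n}$ (with $\mathbf M = x^k I$ plus terms divisible by~$t$, hence invertible over $\set K(x)((t))$), the vector $\mathbf g(t)$ collects the finitely many unknown one-variable series $(\partial_x^j F_s)(0,t)$ for $0\le j<k$ and all~$s$, and $\mathbf c,\mathbf N$ are explicit polynomial data.

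Next comes the kernel-method core. Let $K(x,t):=\det\mathbf M(x,t)\in\set K[x,t]$; by Cramer's rule, $\mathbf f = K^{-1}\,\operatorname{adj}(\mathbf M)\,(\mathbf c + \mathbf N\mathbf g)$, so every component of $K\mathbf f$ is a $\set K[x,t]$-linear combination of $1$ and the entries of~$\mathbf g$. The idea is to differentiate this identity and then ``kill the kernel'' at the small roots $X_\ell(t)$ of $K(x,t)=0$ that lie in $\set K^{\mathrm{alg}}[[t]]$ (those with $X_\ell(0)=0$), exactly as in \cite{bousquet06}: because $\partial_x$ applied to $K(X_\ell(t),t)\cdot(\text{component of }\mathbf f)$ makes the $\partial_x\mathbf f$ contribution vanish (its coefficient is $K(X_\ell,t)=0$), each such root produces a polynomial relation among $t$, the $X_\ell$, the entries of $\mathbf g$, and the entries of $\mathbf g':=\mathbf g'(t)$. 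Counting: $K$ has $x$-degree $k n$ up to the unit $x^{kn}$-part, and one checks it has exactly $kn$ small roots, matching the $kn$ unknown series in~$\mathbf g$ (and another $kn$ equations govern $\mathbf g'$). Together with the trivial relations, this gives a finite polynomial system over $\set K[t]$ in the finitely many indeterminates $\{X_\ell\},\{g_m\},\{g'_m\}$ whose solution in $\set K^{\mathrm{alg}}[[t]]$ is unique — uniqueness being inherited from the uniqueness of $\mathbf f$ and the fact that the kernel roots are themselves algebraic (they satisfy $K(x,t)=0$). By elimination over $\set K[t]$ (a resultant/Gröbner computation), each $g_m$ is algebraic over $\set K(t)$; feeding this back into $K\mathbf f = \operatorname{adj}(\mathbf M)(\mathbf c+\mathbf N\mathbf g)$ shows each component of $\mathbf f$ is algebraic over $\set K(x,t)$, hence over $\set K[x,t]$.

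The main obstacle I expect is the bookkeeping that makes the kernel method go through in the vector setting: verifying that $K(x,t)=\det\mathbf M$ genuinely has the right factored form $x^{kn}\cdot(\text{unit})$ and therefore exactly $kn$ small roots, and that the $kn$ differentiated equations obtained by specializing at these roots are \emph{independent enough} to pin down $\mathbf g$ uniquely — equivalently, that the relevant elimination ideal is zero-dimensional over $\set K(t)$. In the scalar case this independence is handled by a clean argument about the Jacobian of the kernel system being generically nonsingular; here one must either argue that the $X_\ell(t)$ are pairwise distinct (generic separability of $K$ in $x$, which can fail only on a proper subvariety and is then circumvented by a perturbation or squarefree-part argument) or, more robustly, invoke the abstract fact that a polynomial system over $\set K[t]$ admitting a unique power-series solution whose coordinates are constrained to be algebraic forces that solution to be algebraic — so that the uniqueness established at the start does all the work and the only real task is to exhibit \emph{enough} polynomial relations, not to re-prove uniqueness. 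I would structure the write-up so that the combinatorially meaningful count ``$\#$unknown boundary series $=$ $\#$small kernel roots'' is the one lemma proved in detail, and everything else reduces to closure properties of algebraic series and a black-box elimination step.
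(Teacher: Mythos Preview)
Your overall strategy---rewrite~\eqref{eq:fun} so that the left side is $\mathbf M(x,t)\mathbf f(x,t)$ with $\mathbf M=x^k\mathbf I_n+O(t)$, then kill the kernel at the $nk$ small roots of $\det\mathbf M$ to get a linear system for the $nk$ boundary unknowns---is exactly the paper's approach. But two points need correction.

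First, the differentiation step is spurious. In the nonlinear Bousquet-M\'elou--Jehanne setting one differentiates because the equation is genuinely nonlinear in $f(x)$; here the system is linear in $\mathbf f$, so once you have $K\mathbf f=\operatorname{adj}(\mathbf M)(\mathbf c+\mathbf N\mathbf g)$, plugging in a root $X_\ell$ of $K$ already kills the left side and yields a relation purely in~$\mathbf g$ (the rows of $\operatorname{adj}(\mathbf M(X_\ell,t))$ are scalar multiples of a left null vector of $\mathbf M(X_\ell,t)$, so this is equivalent to multiplying the original equation on the left by that null vector). No $\partial_x$, no $\mathbf g'$.

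Second, and this is the real gap: you correctly flag that the hard part is showing the resulting $nk\times nk$ linear system for $\mathbf g$ is nonsingular, but neither of your proposed remedies works as stated. The ``abstract fact'' you invoke---that a polynomial system with a unique power-series solution has an algebraic solution---is false without a nondegeneracy hypothesis; uniqueness in $\set K[[t]]$ does not imply the elimination ideal is zero-dimensional over $\set K(t)$. And ``generic separability'' of $K$ in $x$ is not enough either: even if the $nk$ roots $X_\ell$ are distinct, you still have to show that the $nk$ row vectors $\mathbf v_\ell(t)\mathbf N(X_\ell(t),t)$ (with $\mathbf v_\ell$ a left null vector of $\mathbf M(X_\ell,t)$) are linearly independent, and nothing in your outline addresses that. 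The paper resolves this with a concrete perturbation: it replaces $t$ by $t^2$ and adds $t\,\epsilon\operatorname{diag}(\lambda_1,\dots,\lambda_n)\Delta^k\mathbf f$ with distinct nonzero $\lambda_j$ and a fresh variable~$\epsilon$, so that the constant term (in $t$ and $x$) of $\mathbf P$ becomes a diagonal matrix with distinct nonzero eigenvalues. Then the $nk$ kernel roots have leading terms $\omega^i\lambda_j^{1/k}t^{1/k}$ and the associated left null vectors tend to the standard basis vectors; the leading part of the $nk\times nk$ coefficient matrix is then an explicit block-Vandermonde matrix whose determinant is computed by hand and shown to be nonzero. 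The original solution is recovered as the coefficient of $\epsilon^0$. This nonsingularity verification is the entire content of the proof, and your proposal does not supply it.
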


It is clear that the functional equation has a unique solution in $\mathbb{K}[x][[t]]^n$,
because we can compute its coefficients recursively via
\[
  [t^0]\mathbf f=\mathbf [t^0] a
  \quad\text{and}\quad
  [t^{n+1}]\mathbf f=[t^{n+1}]\mathbf a+
            \sum_{i=0}^k \sum_{j=0}^n [t^j]\mathbf B_i\Delta^i [t^{n-j}]\mathbf f\quad(n\in\set N).
\]
The nontrivial part of the theorem is that the components of the solution are algebraic.
For proving this part of the theorem, we may assume without loss of generality that $\set K$ is
algebraically closed, and we will do so from now on.

Our proof is an adaption of the proof idea of Thm.~3 in~\cite{bousquet06} to linear systems.
We first bring the unevaluated terms $\mathbf f(x,t)$ hidden in the delta terms to the left hand side,
so that the right hand side only contains $\mathbf f(0,t)$ or other evaluated versions of $\mathbf f$.
This can be done by translating the delta terms into evaluations of partial derivatives. Using
$[x^j]\Delta^i\mathbf f(x,t)=[x^{i+j}]\mathbf f(x,t)$
and
$[x^j]\mathbf f(x,t) = \frac1{j!}\mathbf f^{(j)}(0,t)$,
where $\mathbf f^{(j)}(0,t)$ is the $j$th derivative with respect to~$x$, evaluated at $x=0$, we
can write
\[
 \Delta^i\mathbf f(x,t) = \frac1{x^i}\biggl(\mathbf f(x,t) - \sum_{j=0}^{i-1}\frac{x^j}{j!}\mathbf f^{(j)}(0,t)\biggr).
\]
The functional equation~\eqref{eq:fun} can therefore be rewritten in the form
\begin{equation}\label{eq:fun2}
\biggl(x^k\mathbf I_n - t\sum_{i=0}^k x^{k-i}\mathbf B_i\biggr)\mathbf f(x,t)
= x^k\mathbf a - t \sum_{j=0}^{k-1}\biggl(\sum_{i=j+1}^k\frac{x^{k+j-i}}{j!}\mathbf B_i\biggr)\mathbf f^{(j)}(0,t).
\end{equation}
The key to the proof is the matrix on the left side. We will first prove the theorem
under the additional assumption that this matrix has the form $x^k\mathbf I_n - t\mathbf P$ for some
matrix $\mathbf P$ such that $[t^0x^0]\mathbf P$ is a non-singular diagonal matrix (Lemma~\ref{3}).
Afterwards, the general case is reduced to this situation by a perturbation argument.

In the proof of Lemma~\ref{3}, we will relate eigenvectors of $[t^0x^0]\mathbf P$
to eigenvectors of $\mathbf P$, using the following fact.

\begin{lem}\label{1}
  Let $\mathbf{P}\in\mathbb{K}[x][t]^{n\times n}$, let $\mathbf{P}_0 = [t^0x^0]\mathbf{P}\in\mathbb{K}^{n\times n}$,
  and let $\mathbf{K}= x^k\mathbf{I}_n- t\mathbf{P}\in\set K[x][t]^{n\times n}$, for some $k\in\set N$.
  Let $\lambda$ be an eigenvalue of~$\mathbf{P}_0$, let $\omega$ be a primitive $k$-th root of unity, and let $i\in\{0,\dots,k-1\}$.
  Then there is a series $y(t)=\omega^i\lambda^{1/k}t^{1/k} + \O(t^{2/k})\in\mathbb{K}[[t^{1/k}]]$ such that 
  $\det(\mathbf{K}(y(t),t))=0$. 
  Furthermore, there is a vector $\mathbf{v}(t)\in\mathbb{K}[[t^{1/k}]]^n$ with algebraic coordinates such that
  $\mathbf{v}(t)\mathbf{K}(y(t),t) = 0$ and $\mathbf{v}(0)$ is an eigenvector of $\mathbf{P}_0$ for~$\lambda$.
\end{lem}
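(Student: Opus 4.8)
The plan is to extract the claimed root $y(t)$ and the null (co)vector $\mathbf v(t)$ by a Newton–Puiseux argument applied to $\det\mathbf K(x,t)$ and then to the matrix $\mathbf K(y(t),t)$ itself. First I would look at $\mathbf K(x,0) = x^k\mathbf I_n$, so $\det\mathbf K(x,0) = x^{kn}$, and at $\det\mathbf K(x,t)$ more carefully: substituting $x = u\,t^{1/k}$ gives $\mathbf K(u t^{1/k},t) = t\bigl(u^k\mathbf I_n - \mathbf P_0\bigr) + \O(t^{1+1/k})$, since $\mathbf P(ut^{1/k},t) = \mathbf P_0 + \O(t^{1/k})$. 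Hence $t^{-n}\det\mathbf K(u t^{1/k}, t)\big|_{t=0} = \det(u^k\mathbf I_n - \mathbf P_0) = \prod_{\mu}(u^k-\mu)$, the product running over eigenvalues $\mu$ of $\mathbf P_0$ with multiplicity. So the $kn$ small branches $x(t)$ of $\det\mathbf K = 0$ split, at lowest order, into groups: for each eigenvalue $\lambda$ and each $i\in\{0,\dots,k-1\}$ there is (at least) one branch with leading term $\omega^i\lambda^{1/k} t^{1/k}$. This produces the series $y(t) = \omega^i\lambda^{1/k}t^{1/k} + \O(t^{2/k})\in\overline{\set K((t^{1/k}))}$ with $\det\mathbf K(y(t),t) = 0$; since $y(0)=0$ it actually lies in $\set K[[t^{1/k}]]$ (its coefficients are determined recursively and algebraically). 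Writing $y(t)^k = \lambda t + \O(t^{1+1/k})$ will be the identity I use repeatedly.

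Next I would produce the left null vector. Evaluating, $\mathbf K(y(t),t) = y(t)^k\mathbf I_n - t\,\mathbf P(y(t),t)$. Dividing by $t$ (legitimate as a formal manipulation on the kernel equation) and using $y(t)^k/t = \lambda + \O(t^{1/k})$, the matrix $\tfrac1t\mathbf K(y(t),t)$ equals $\lambda\mathbf I_n - \mathbf P_0 + \O(t^{1/k})$ as an element of $\set K[[t^{1/k}]]^{n\times n}$. Its constant term $\lambda\mathbf I_n - \mathbf P_0$ is singular with a nonzero left null vector, namely any left eigenvector $\mathbf v_0$ of $\mathbf P_0$ for $\lambda$. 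I then want to lift $\mathbf v_0$ to a left null vector $\mathbf v(t)$ of $\tfrac1t\mathbf K(y(t),t)$ over $\set K[[t^{1/k}]]$. Because $\lambda\mathbf I_n - \mathbf P_0$ may have rank $n-r$ with $r>1$ and will typically not be invertible on any obvious complement, I cannot simply invert; instead I solve order by order in $t^{1/k}$ for a vector $\mathbf v(t) = \mathbf v_0 + \mathbf v_1 t^{1/k} + \cdots$ with $\mathbf v(t)\,\tfrac1t\mathbf K(y(t),t) = 0$. At each order the equation reads $\mathbf v_m(\lambda\mathbf I_n - \mathbf P_0) = (\text{known vector built from }\mathbf v_0,\dots,\mathbf v_{m-1})$; this is solvable precisely when the right-hand side lies in the row space of $\lambda\mathbf I_n-\mathbf P_0$. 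The clean way to guarantee solvability is to first pass to a nonzero $n\times n$ minor: choose index sets so that the corresponding submatrix of $\tfrac1t\mathbf K(y(t),t)$ is singular but a size-$(n-1)$ subminor is a unit in $\set K[[t^{1/k}]]$ (possible after reindexing because $\det=0$ but the matrix is not identically zero mod $t^{1/k}$ — at worst one passes to a generalized eigenvector, adjusting $\mathbf v_0$ within the eigenspace); then Cramer's rule over the ring $\set K[[t^{1/k}]]$ furnishes an explicit null vector with coordinates that are power series in $t^{1/k}$, hence algebraic over $\set K(t)$, and whose value at $t=0$ is the corresponding eigenvector of $\mathbf P_0$.

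The main obstacle I anticipate is exactly this lifting step when $\mathbf P_0$ is not semisimple or $\lambda$ has high multiplicity: the naive order-by-order recursion can get stuck, and one must be careful that the particular branch $y(t)$ chosen is compatible with the chosen eigenvector (different generalized eigenvectors may pair with different Puiseux branches sharing the same leading coefficient). The way I would handle this is to treat $\mathbf P_0$ as \emph{generic} first — on a Zariski-dense set of perturbations $\mathbf P_0$ is diagonalizable with distinct eigenvalues, where both the branch and the null vector are unique and the recursion is triangular — and then argue by continuity/specialization that the desired $y(t)$ and $\mathbf v(t)$ persist for all $\mathbf P_0$, or alternatively invoke the local analytic theory: $\det\mathbf K$ defines an algebroid plane curve and the cokernel of $\mathbf K(y(t),t)$ over the complete local ring of each branch is a finitely generated module whose generic rank is $1$, yielding a generically nonvanishing section that we normalize to have the prescribed value at $t=0$. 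Once $y(t)$ and $\mathbf v(t)$ are in hand over $\set K[[t^{1/k}]]$ with algebraic coordinates and the stated constant terms, the lemma follows; I would close by noting that $\mathbf v(t)\mathbf K(y(t),t)=0$ over $\set K[[t^{1/k}]]$ implies the same identity after clearing denominators, which is all that is needed downstream in the proof of Lemma~\ref{3}.
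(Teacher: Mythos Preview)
Your Newton--Puiseux argument for the existence of the branch $y(t)=\omega^i\lambda^{1/k}t^{1/k}+\O(t^{2/k})$ is essentially the same as the paper's: both identify the lowest-order part of $\det\mathbf K$ with the characteristic polynomial of $\mathbf P_0$ in the variable $x^k/t$ and then invoke Newton--Puiseux. That part is fine.

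For the null vector, however, you are working in the wrong direction and this creates the very obstacles you spend most of the argument worrying about. The lemma only asserts the \emph{existence} of some $\mathbf v(t)$ whose constant term is \emph{an} eigenvector of $\mathbf P_0$ for~$\lambda$; it does not claim that a \emph{prescribed} eigenvector $\mathbf v_0$ can be lifted. The paper exploits this: since $\det\mathbf K(y(t),t)=0$, the matrix $\mathbf K(y(t),t)$ is singular over the field $\set K(y(t),t)$, so its left kernel contains a nonzero vector $\mathbf v$ with algebraic coordinates; after clearing denominators and multiplying by a suitable power of $t^{1/k}$ one arranges $\mathbf v(t)\in\set K[[t^{1/k}]]^n$ with $\mathbf v(0)\neq 0$. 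Setting $t=0$ in $\mathbf v(t)\mathbf K(y(t),t)=0$ (after dividing by $t$, as you already observed) yields $\mathbf v(0)(\lambda\mathbf I_n-\mathbf P_0)=0$, so $\mathbf v(0)$ is automatically an eigenvector. No lifting, no recursion, no semisimplicity hypothesis, no perturbation is needed.

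Your order-by-order lift can genuinely get stuck (you say so yourself), and your proposed repairs are not complete. The Cramer/minor argument requires an $(n-1)\times(n-1)$ minor of $\tfrac1t\mathbf K(y(t),t)$ that is a unit in $\set K[[t^{1/k}]]$, but when $\lambda$ has geometric multiplicity $r>1$ every $(n-1)\times(n-1)$ minor of the constant term $\lambda\mathbf I_n-\mathbf P_0$ vanishes, so none of these minors is a unit; the parenthetical about ``passing to a generalized eigenvector'' does not fix this. The genericity-then-specialize route could in principle be made rigorous, but it is considerable extra work for something that the paper handles in two lines by reversing the logic: take any kernel vector first, then read off its value at~$0$.
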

\begin{proof}
By definition of the determinant, 
$\det(\mathbf{K}) = \det(x^k \mathbf{I}_n-t\mathbf{P}) = \det(x^k\mathbf{I}_n-t\mathbf{P}_0) + \O(t^2)$.
The polynomial $\det(x^k\mathbf{I}_n-t\mathbf{P}_0) = (-1)^n t^n \det(\mathbf{P}_0 - \frac{x^k}{t}\mathbf{I}_n)\in\set K(t)[x]$
has the root $\omega^i \lambda^{1/k}t^{1/k}$.
Hence, by the Newton-Puiseux algorithm~\cite{kauers10j}, there is a series
$y(t)=\omega^i\lambda^{1/k}t^{1/k}+\O(t^{2/k})\in\set K[[t^{1/k}]]$
such that $\det(\mathbf{K}(y(t),t))=0$.
This means that the matrix $\mathbf{K}(y(t),t)\in\set K(y(t),t)^{n\times n}$ is singular,
so its left-kernel contains a nonzero element $\mathbf{v}\in\set K(y(t),t)^{n\times n}$.
Since $y(t)$ is algebraic (because $\det(\mathbf K)$ is a polynomial in~$x$), also the components of $\mathbf{v}$ are algebraic.
After multiplying by a suitable power of~$t$, if needed, we may assume
that the components of $\mathbf{v}(t)$ are in $\set K[[t^{1/k}]]$ and that $\mathbf{v}(0)$ is not
the zero vector. Then $\mathbf{v}(t)\mathbf{K}(y(t),t)=0$ implies $\mathbf{v}(0)(\lambda\mathbf{I}_n-\mathbf{P}_0)=0$,
which completes the proof of the lemma.
\end{proof}

Secondly, we will have to ensure that a certain matrix is nonsingular.
This fact is provided by the following lemma.

\begin{lem}\label{2}
Let $\lambda_0,\dots,\lambda_{n-1}\in\mathbb{K}\setminus\{0\}$ and let $\omega$ be a primitive $k$-th root of unity. For $u,v=0,\dots,nk-1$ define $c_{u,v}=(\omega^{u \bmod k}\lambda_{\lfloor u/k \rfloor})^{\lfloor v/n\rfloor} \delta_{\lfloor u/k \rfloor,v \bmod n}$. Then 
\[
\det((c_{u,v})_{u,v=0}^{nk-1}) = \pm\Bigl(\prod_{0\leq i<j<k}(\omega^j - \omega^i)\Bigr)^n \prod_{\ell=0}^{n-1}\lambda_\ell^{\binom{k}{2}}. 
\]
In particular, this determinant is not zero.
\end{lem}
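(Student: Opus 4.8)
The plan is to recognize $(c_{u,v})$, after a single permutation of the columns, as a block-diagonal matrix whose diagonal blocks are Vandermonde matrices. First I would reindex: for a row index $u\in\{0,\dots,nk-1\}$ write $u=ak+b$ with $a\in\{0,\dots,n-1\}$, $b\in\{0,\dots,k-1\}$, so that $\lfloor u/k\rfloor=a$ and $u\bmod k=b$; for a column index $v$ write $v=cn+d$ with $c\in\{0,\dots,k-1\}$, $d\in\{0,\dots,n-1\}$, so that $\lfloor v/n\rfloor=c$ and $v\bmod n=d$. In this notation the entry becomes $c_{u,v}=(\omega^{b}\lambda_{a})^{c}\,\delta_{a,d}$, which vanishes unless $a=d$. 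Grouping the rows into the $n$ consecutive blocks of size $k$ determined by $a$, and the columns into the $n$ blocks of size $k$ determined by $d$, the $(a,d)$-block is therefore zero whenever $a\neq d$.

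The rows already appear in lexicographic order $(a,b)$, but the columns appear in the order $(c,d)$ and must be rearranged into the order $(d,c)$ to make this block structure visible; let $\sigma$ be the permutation of $\{0,\dots,nk-1\}$ that sends the column $v=cn+d$ to position $dk+c$. After applying $\sigma$ to the columns, the matrix is block diagonal with diagonal blocks $\mathbf{M}_0,\dots,\mathbf{M}_{n-1}$, where $\mathbf{M}_\ell$ has $(b,c)$-entry $(\omega^{b}\lambda_\ell)^{c}$ for $b,c\in\{0,\dots,k-1\}$. Each $\mathbf{M}_\ell$ is the Vandermonde matrix on the nodes $\lambda_\ell,\omega\lambda_\ell,\dots,\omega^{k-1}\lambda_\ell$, so
\[
  \det\mathbf{M}_\ell=\prod_{0\leq i<j<k}\bigl(\omega^{j}\lambda_\ell-\omega^{i}\lambda_\ell\bigr)
  =\lambda_\ell^{\binom{k}{2}}\prod_{0\leq i<j<k}(\omega^{j}-\omega^{i}),
\]
since the product ranges over $\binom{k}{2}$ pairs. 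Multiplying the block determinants and accounting for the column permutation gives
\[
  \det\bigl((c_{u,v})_{u,v=0}^{nk-1}\bigr)=\sgn(\sigma)\prod_{\ell=0}^{n-1}\det\mathbf{M}_\ell
  =\sgn(\sigma)\Bigl(\prod_{0\leq i<j<k}(\omega^{j}-\omega^{i})\Bigr)^{n}\prod_{\ell=0}^{n-1}\lambda_\ell^{\binom{k}{2}},
\]
which is the asserted identity with $\pm=\sgn(\sigma)$. For the final claim, note that since $\omega$ is a primitive $k$-th root of unity, the powers $1,\omega,\dots,\omega^{k-1}$ are pairwise distinct, so each factor $\omega^{j}-\omega^{i}$ with $i<j$ is nonzero, and each $\lambda_\ell$ is nonzero by hypothesis; hence the determinant is a product of nonzero elements of the field $\mathbb K$ and is therefore nonzero.

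I do not expect a genuine obstacle here — the argument is essentially bookkeeping. The one point that needs care is that the index set $\{0,\dots,nk-1\}$ gets sliced into blocks in two different ways (rows into $n$ blocks of size $k$ via $\lfloor u/k\rfloor$, columns into $k$ blocks of size $n$ via $\lfloor v/n\rfloor$), and one must verify that the single column permutation $\sigma$ reconciles these two groupings so that the block-diagonal form actually appears. The sign of $\sigma$ is precisely what produces the $\pm$ in the statement; it is the sign of the "transpose" permutation of an $n\times k$ grid and could be written down explicitly, but for the use of this lemma in the proof of Lemma~\ref{3} only the nonvanishing matters, so I would leave the sign unspecified.
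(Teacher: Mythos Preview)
Your proof is correct and follows essentially the same approach as the paper: both permute the columns so that the matrix becomes block diagonal with $n$ Vandermonde blocks of size $k\times k$, then multiply the block determinants. Your reindexing $u=ak+b$, $v=cn+d$ and explicit description of the permutation $\sigma$ make the bookkeeping more transparent than the paper's version, but the argument is the same.
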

\begin{proof}
The expression on the right is non-zero because $\lambda_\ell\neq 0$ for all $\ell$ and $\omega$ is a primitive root of unity. It remains to prove the claimed identity.

We permute the columns of the matrix such that the entry at position $(u,v)$ is 
\[
 (\omega^{u \bmod k}\lambda_{\lfloor u/k \rfloor})^{v \bmod k} \delta_{\lfloor u/k \rfloor,\lfloor v/k \rfloor}.
\]
The resulting matrix is block diagonal with $n$ blocks of size $k\times k$. The $\ell$-th block is 
the Vandermonde matrix $((\omega^{ij}\lambda_\ell^j))_{i,j=0}^{k-1}$, whose determinant is $\lambda_{\ell}^{\binom{k}{2}}\prod_{i<j}(\omega^j-\omega^i)$. Since the determinant of a block diagonal matrix is the product of the determinants of its blocks,
we arrive at the desired conclusion.
\end{proof}

The idea for the proof of Lemma~\ref{3} is now to replace the variable $x$ by various algebraic
series $y(t)$ in such a way that the terms $\mathbf f(x,t)$ in \eqref{eq:fun2} drop out and a linear
system for the components of $\mathbf f^{(j)}(0,t)$ arises, and then to show that this system has a unique solution.

\begin{lem}\label{3}
  Let $\lambda_0,\dots,\lambda_{n-1}\in\set K\setminus\{0\}$ be pairwise distinct, and let $\mathbf{E}=\diag(\lambda_0,\dots,\lambda_{n-1})\in\set K^{n\times n}$. 
  Let $\mathbf{a}\in\set K[x][t]^n$, and let $\mathbf{P},\mathbf{Q}_0,\dots,\mathbf{Q}_{k-1}\in\set K[x][t]^{n\times n}$.
  Suppose that $\mathbf{P}=\mathbf{E}+\O(t)$ and $\mathbf{Q}_j=x^j \mathbf{E} + \O(t)$ for $j=0,\dots,k-1$.
  If $\mathbf{f}\in\set K[x][[t]]^n$ and $\mathbf{g}_0,\dots,\mathbf{g}_{k-1}\in\set K[[t]]^n$ are such that 
  \begin{alignat}1\label{funeq}
    \left(x^k \mathbf{I}_n - t \mathbf{P}\right)\mathbf{f} = x^k \mathbf{a} - t\sum_{j=0}^{k-1} \mathbf{Q}_j\mathbf{g}_j,
  \end{alignat}
  then the components of $\mathbf{g}_0,\dots,\mathbf{g}_{k-1}$ and $\mathbf{f}$ are algebraic over $\set K[x][t]$.
\end{lem}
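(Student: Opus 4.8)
The plan is to run the kernel-method argument sketched just before the lemma. Put $\mathbf{P}_0 := [t^0x^0]\mathbf{P}$; since $\mathbf{P}=\mathbf{E}+\O(t)$ and $\mathbf{E}$ is constant in $x$, we get $\mathbf{P}_0=\mathbf{E}$, whose eigenvalues are the pairwise distinct nonzero numbers $\lambda_0,\dots,\lambda_{n-1}$ (recall $\set K$ is algebraically closed). Fix a primitive $k$-th root of unity $\omega$ and, for every $\ell\in\{0,\dots,n-1\}$ and $i\in\{0,\dots,k-1\}$, apply Lemma~\ref{1} to obtain a Puiseux series $y_{\ell,i}(t)=\omega^i\lambda_\ell^{1/k}t^{1/k}+\O(t^{2/k})\in\set K[[t^{1/k}]]$ with $\det\mathbf{K}(y_{\ell,i}(t),t)=0$, where $\mathbf{K}=x^k\mathbf{I}_n-t\mathbf{P}$, together with a left null vector $\mathbf{v}_{\ell,i}(t)\in\set K[[t^{1/k}]]^n$ with algebraic coordinates such that $\mathbf{v}_{\ell,i}(t)\mathbf{K}(y_{\ell,i}(t),t)=0$ and $\mathbf{v}_{\ell,i}(0)$ is an eigenvector of $\mathbf{E}$ for $\lambda_\ell$. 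Because the $\lambda_\ell$ are pairwise distinct, that eigenspace is spanned by the $\ell$-th standard row vector $\mathbf{e}_\ell$, so after rescaling $\mathbf{v}_{\ell,i}$ by a nonzero constant we may assume $\mathbf{v}_{\ell,i}(0)=\mathbf{e}_\ell$.

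Next I would substitute $x=y_{\ell,i}(t)$ into~\eqref{funeq}. Since $y_{\ell,i}(0)=0$ this substitution is a well-defined ring homomorphism $\set K[x][[t]]\to\set K[[t^{1/k}]]$, and multiplying the resulting identity on the left by $\mathbf{v}_{\ell,i}(t)$ annihilates the $\mathbf{f}$-term because $\mathbf{v}_{\ell,i}(t)\mathbf{K}(y_{\ell,i}(t),t)=0$. This yields the scalar identity
\[
 t\sum_{j=0}^{k-1}\mathbf{v}_{\ell,i}(t)\,\mathbf{Q}_j(y_{\ell,i}(t),t)\,\mathbf{g}_j(t)=y_{\ell,i}(t)^k\,\mathbf{v}_{\ell,i}(t)\,\mathbf{a}(y_{\ell,i}(t),t).
\]
Both sides are divisible by $t$ in $\set K[[t^{1/k}]]$ (the leading term of $y_{\ell,i}(t)^k$ is $\lambda_\ell t$), so after dividing by $t$ and collecting the $nk$ identities, one for each pair $(\ell,i)$, we obtain a linear system $\mathbf{M}(t)\,\mathbf{h}(t)=\mathbf{b}(t)$ for the vector $\mathbf{h}=(\mathbf{g}_0,\dots,\mathbf{g}_{k-1})$ of the $nk$ unknown series, where all entries of $\mathbf{M}(t)$ and $\mathbf{b}(t)$ lie in $\set K[[t^{1/k}]]$ and are algebraic over $\set K(t)$ (they are built from the algebraic series $y_{\ell,i}$, $\mathbf{v}_{\ell,i}$, $t^{1/k}$ and the polynomial data $\mathbf{Q}_j,\mathbf{a}$).

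The crux is the invertibility of $\mathbf{M}(t)$. From $\mathbf{Q}_j=x^j\mathbf{E}+\O(t)$ and $j<k$ one gets the expansion $\mathbf{Q}_j(y_{\ell,i}(t),t)=\omega^{ij}\lambda_\ell^{j/k}t^{j/k}\mathbf{E}+\O(t^{(j+1)/k})$, so the block of columns of $\mathbf{M}(t)$ attached to $\mathbf{g}_j$ carries a common factor $t^{j/k}$. Writing $\mathbf{M}(t)=\tilde{\mathbf{M}}(t)\,\diag(t^{0/k}\mathbf{I}_n,\dots,t^{(k-1)/k}\mathbf{I}_n)$, the matrix $\tilde{\mathbf{M}}(0)$ has $(\ell,i),(j,m)$-entry $\lambda_\ell\,(\omega^i\lambda_\ell^{1/k})^j\,\delta_{\ell,m}$, which is a row-rescaling (by the nonzero factors $\lambda_\ell$) of precisely the matrix of Lemma~\ref{2} with each $\lambda_\ell$ replaced by $\lambda_\ell^{1/k}$. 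Hence Lemma~\ref{2} gives $\det\tilde{\mathbf{M}}(0)\neq0$, so $\det\mathbf{M}(t)$ is a unit times a power of $t^{1/k}$ and $\mathbf{M}(t)$ is invertible over $\set K((t^{1/k}))$. By Cramer's rule the components of $\mathbf{h}(t)$ are rational expressions in the algebraic entries of $\mathbf{M}(t)$ and $\mathbf{b}(t)$, hence algebraic over $\set K(t)$; in particular the components of $\mathbf{g}_0,\dots,\mathbf{g}_{k-1}$ are algebraic over $\set K[x][t]$. Finally $\det(x^k\mathbf{I}_n-t\mathbf{P})=x^{nk}+\O(t)$ is a nonzero element of $\set K[x,t]$, so $x^k\mathbf{I}_n-t\mathbf{P}$ is invertible over $\set K(x,t)$, and solving~\eqref{funeq} for $\mathbf{f}$ exhibits each component of $\mathbf{f}$ as a rational function of $x$, $t$ and the already-algebraic components of the $\mathbf{g}_j$, so $\mathbf{f}$ is algebraic over $\set K[x,t]$ as well. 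I expect the main obstacle to be the bookkeeping in matching $\tilde{\mathbf{M}}(0)$ with the Lemma~\ref{2} matrix — getting the row/column index conventions and the scaling exponents $t^{j/k}$ exactly right — together with the care needed to justify the Puiseux substitution and the divisibility by $t$.
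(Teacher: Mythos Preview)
Your argument is correct and is essentially the paper's own proof: you invoke Lemma~\ref{1} to produce the $nk$ Puiseux roots $y_{\ell,i}$ and left null vectors $\mathbf{v}_{\ell,i}$, substitute and left-multiply to get a linear system in the $nk$ unknown components of the $\mathbf{g}_j$, and then verify invertibility of the coefficient matrix via Lemma~\ref{2}. The only cosmetic differences are your normalization $\mathbf{v}_{\ell,i}(0)=\mathbf{e}_\ell$ (the paper uses $\lambda_j^{-1}\mathbf{e}_j$, which makes the extra row factor $\lambda_\ell$ disappear) and your explicit extraction of the $t^{j/k}$ column factors, which the paper leaves implicit.
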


\begin{proof}
  By Lemma~\ref{1}, the polynomial $\det(x^k\mathbf{I}_n-t\mathbf{P})$ has $nk$
  series roots $y_{ij}(t)$ of the form $y_{ij}(t) =
  \omega^i\lambda_j^{1/k}t^{1/k} + \O(t^{2/k})$.  For each such root, the matrix
  $y_{ij}(t)^k\mathbf{I}_n-t\mathbf{P}(y_{ij}(t),t)$ is singular, and, again
  using Lemma \ref{1}, the left-kernel of
  $y_{ij}(t)^k\mathbf{I}_n-t\mathbf{P}(y_{ij}(t),t)$ contains a vector
  $\mathbf{v}_{ij}(t)=\lambda_j^{-1}\mathbf{e}_j + \O(t^{1/k})$, whose
  coordinates are algebraic.
  Here $\mathbf e_j$ denotes the $j$th unit vector. 

  For $i=0,\dots,k-1$ and $j=0,\dots,n-1$ we replace $x$ by $y_{ij}(t)$ in
  equation \eqref{funeq} and multiply with $\mathbf{v}_{ij}(t)$ from the
  left. This gives an inhomogeneous linear system with $nk$ equations for the $nk$ unknown
  components of $\mathbf{g}_0(t),\dots,\mathbf{g}_{k-1}(t)\in\mathbb{K}[[t]]^n$,
  whose coefficient matrix is
  \[
  \begin{pmatrix}
    \mathbf{v}_{0,0}(t) \mathbf{Q}_0(y_{0,0}(t),t) & \cdots & \mathbf{v}_{0,0}(t) \mathbf{Q}_{k-1}(y_{0,0}(t),t) \\
    \mathbf{v}_{1,0}(t) \mathbf{Q}_0(y_{1,0}(t),t) & \cdots & \mathbf{v}_{1,0}(t) \mathbf{Q}_{k-1}(y_{1,0}(t),t) \\
    \vdots & \ddots & \vdots \\
    \mathbf{v}_{k-1,n-1}(t) \mathbf{Q}_0(y_{k-1,n-1}(t),t) & \cdots & \mathbf{v}_{k-1,n-1}(t) \mathbf{Q}_{k-1}(y_{k-1,n-1}(t),t) 
  \end{pmatrix}.
  \]
  We are done if we can show that this matrix is invertible, because this
  implies that the inhomogeneous linear system for the components of
  $\mathbf{g}_0,\dots,\mathbf{g}_{k-1}$ has a unique solution. The components of its
  solution vector must be algebraic, because all the series appearing in the
  linear system are algebraic.  Returning to equation~\eqref{funeq}, we finally
  see that the algebraicity of the components of
  $\mathbf{g}_0,\dots,\mathbf{g}_{k-1}$ implies the claimed algebraicity of the
  components of $\mathbf{f}$.

  To see that the matrix above is invertible, we use the assumption that
  $\mathbf{Q}_\ell(y_{ij}(t),t) = \mathbf{E}\hspace{1 pt} y_{ij}(t)^\ell + \O(t) = (\omega^i\lambda_j^{1/k})^\ell \mathbf{E} t^{\ell/k} + \O(t^{(\ell+1)/k}) \in\set K[[t^{1/k}]]^{n\times n}$.
  Together with $\mathbf{e}_j\mathbf{E}=\lambda_j \mathbf{e}_j$, where $\mathbf e_j$ is again the $j$th unit vector, it follows that
  \[
    \mathbf{v}_{ij}(t) \mathbf{Q}_\ell(y_{ij}(t),t) =  (\omega^i\lambda_j^{1/k})^\ell \mathbf{e}_j t^{\ell/k} + \O(t^{(\ell+1)/k})\in\set K[[t^{1/k}]]^n,
  \]
  for $i=0,\dots,k-1$, $j=0,\dots,n-1$ and $\ell=0,\dots,k-1$.
  Therefore, for $u=0,\dots,nk-1$ and $v=0,\dots,nk-1$, the entry of the matrix at position $(u,v)$
  is  
  \[
  c_{u,v}:=(\omega^{u\bmod k}\lambda_{\lfloor u/k\rfloor}^{1/k})^{\lfloor v/n\rfloor}t^{\lfloor v/n \rfloor /k}\delta_{\lfloor u/k\rfloor,v\bmod n} + \O(t^{(\lfloor v/n \rfloor + 1) /k}).
  \]
  By Lemma~\ref{2}, we have $\det((c_{u,v})_{u,v=0}^{nk-1})\neq0$.
  This completes the proof.
\end{proof}

\begin{proof}[Proof of Thm.~\ref{thm:main}]
  We have already argued that existence and uniqueness of a solution in $\set K[x][[t]]^n$ are evident.
  To show that its components are algebraic, we bring equation \eqref{eq:fun} into a form
  where Lemma~\ref{3} applies.
  Let $\epsilon$ be a new variable, 
  let $\lambda_1,\dots,\lambda_n\in\set K\setminus\{0\}$ be pairwise distinct
  and set $\mathbf E=\epsilon\diag(\lambda_1,\dots,\lambda_n)$.
  Set $\tilde{\mathbf a}(x,t):=\mathbf{a}(x,t^2)$,
  $\tilde{\mathbf B}_i(x,t):=t\mathbf{B}_i(x,t^2)$ ($i=0,\dots,k-1$), and
  $\tilde{\mathbf B}_k(x,t):=\mathbf{E} + t\mathbf{B}_k(x,t^2)$, and
  consider the system
  \[
    \tilde{\mathbf f} = \tilde{\mathbf a} + t\sum_{i=0}^k\tilde{\mathbf B}_i\Delta^i\tilde{\mathbf f}. 
  \]
  This system has a unique solution $\tilde{\mathbf f}\in\set K[x,\epsilon][[t]]^n$,
  which is related to the solution $\mathbf f$ of the original equation~\eqref{eq:fun}
  via $\mathbf f(x,t^2)=[\epsilon^0]\tilde{\mathbf f}(x,t)$. 
  We are done if we can show that the components of $\tilde{\mathbf f}$ are algebraic, because then so are
  the components of $\mathbf f$.

  Indeed, translating the delta terms to ordinary derivatives, as earlier, gives 
  \[
   \biggl(x^k\mathbf I_n - t\sum_{i=0}^k x^{k-i}\tilde{\mathbf B}_i\biggr)\tilde{\mathbf f}(x,t)
   = x^k\tilde{\mathbf a} -
   t \sum_{j=0}^{k-1}\biggl(\sum_{i=j+1}^k\frac{x^{k+j-i}}{j!}\tilde{\mathbf B}_i\biggr)\tilde{\mathbf f}{\vphantom{\mathbf f}}^{(j)}(0,t).
  \]
  For the matrix $\mathbf P=\sum_{i=0}^k x^{k-i}\tilde{\mathbf{B}}_i\in\set K(\epsilon)[x,t]^{n\times n}$
  we have $\mathbf P=\mathbf E+\O(t)$,
  and for the matrices $\mathbf Q_j=k!\sum_{i=j+1}^k\frac{x^{k+j-i}}{j!}\tilde{\mathbf B}_i\in\set K(\epsilon)[x,t]^{n\times n}$
  we have $\mathbf Q_j=x^j\mathbf E+\O(t)$ for $j=0,\dots,k-1$.   
  Therefore, Lemma~\ref{3} applies to the perturbed equation above and yields the desired algebraicity result.
  (The lemma is applied with $\set K$ replaced by some algebraic closure of $\set K(\epsilon)$
  and with $\tilde{\mathbf f}^{(j)}(0,t)/k!$ in the role of~$\mathbf g_j$.)
\end{proof}

\section{Models with more than one restriction}\label{sec:4a}

We have seen that inhomogeneous models in a half-space $\mathbb{Z}_{\geq 0}\times\mathbb{Z}^{d-1}$ always have an
algebraic generating function. More generally, we can consider walks restricted to $\set Z_{\geq 0}^p\times\set Z^q$.
In this case, the question arises whether some of the $p$ constraints are
implied by the others, which has led Bostan et al.~\cite{bostan16a} to introduce the \emph{dimension} of a model. Here 
we generalize this notion to inhomogeneous walks.

First consider unrestricted models in~$\set Z^d$. Fix $k$, $\mathbf m$, $\mathbf p$ and a collection of step
sets $\mathbf S_{\mathbf r}\subseteq\set Z^d$ like in Section~\ref{sec:2}. 
Let $\mathbf{S}$ be the union of some disjoint copies of the sets $\mathbf{S}_{\mathbf{r}}$, so that 
a walk in $\set Z^d$ of length $n$ can be viewed as a word $\omega$ over the alphabet~$\mathbf S$.
To any such walk~$\omega$, we associate the vector $(a_{\mathbf u})_{\mathbf u\in\mathbf S}$
where $a_{\mathbf u}\in\set N$ is the number of occurrences of $\mathbf u$ in~$\omega$. While for unrestricted homogeneous
models, every vector of natural numbers is associated with some walk, this is no longer true for inhomogeneous
models. For example, for space-inhomogeneous walks in $\set Z^2$ with $\mathbf S_0=\diagr{NE}$ and $\mathbf S_1=\diagr{SW}$,
the vector $(1,1)$ is not associated with a walk. The next lemma is a characterization of the vectors that are associated with walks.

\begin{lem}\label{lem:4}
  Let $k$, $\mathbf m$, $\mathbf p$ and $\mathbf S_{\mathbf r}\subseteq\set Z^d$ be as in Section~\ref{sec:2},
  and let $\mathbf i_0\in\set Z^d$.
  Let $\mathbf{S}$ be a disjoint union of the sets $\mathbf{S}_{\mathbf r}$ and $(a_{\mathbf u})_{\mathbf u\in\mathbf S}$
  be a vector of nonnegative integers.
  Let $G$ be the multi-graph with all the $\mathbf r$'s as vertices and
  with $\sum_{\mathbf u\in\mathbf S_{\mathbf r}^{\mathbf s}}a_{\mathbf u}$ edges from $\mathbf r$ to~$\mathbf s$, for all
  vertices $\mathbf r,\mathbf s$. 
  Then $(a_{\mathbf u})_{\mathbf u\in\mathbf S}$ is associated with a walk starting at $\mathbf i_0$
  if and only if $G$ has an Eulerian path starting at a vertex $\mathbf r_0$
  with $\mathbf p(\mathbf i_0,0)=\mathbf r_0\bmod\mathbf m$. 
\end{lem}
\begin{proof}
  Since $\mathbf S$ is the union of disjoint copies of the step sets $\mathbf S_{\mathbf r}$,
  every step $\mathbf u\in\mathbf S$ belongs to exactly one such set, and in particular to exactly one subset
  $\mathbf S_{\mathbf r}^{\mathbf s}$. Therefore, any walk $\omega$ in the model which has
  $(a_{\mathbf u})_{\mathbf u\in\mathbf S}$
  as associated vector can be translated into a path in the graph with a starting vertex as required which
  uses $\sum_{\mathbf u\in\mathbf S_{\mathbf r}^{\mathbf s}}a_{\mathbf u}$ times a step from $\mathbf r$ to
  $\mathbf s$, for all vertices $\mathbf r,\mathbf s$. This is an Eulerian path. 
  Conversely, let $(\mathbf r_0,\mathbf r_1,\dots,\mathbf r_n)$ be an Eulerian path in~$G$ with $\mathbf r_0$
  as stated in the lemma. Then for each pair $\mathbf r,\mathbf s$ of vertices there are
  $\sum_{\mathbf u\in\mathbf S_{\mathbf r}^{\mathbf s}}a_{\mathbf u}$ many indices $i\in\{1,\dots,n\}$ such that
  $(\mathbf r_{i-1},\mathbf r_i)=(\mathbf r,\mathbf s)$. For every $i\in\{1,\dots,n\}$, assign an arbitrary
  step $\mathbf u\in\mathbf S_{\mathbf r_{i-1}}^{\mathbf r_i}$ in such a way that each step $\mathbf u$ is chosen
  exactly $a_{\mathbf u}$ times,
  and let $\omega$ be the walk composed of the selected steps. 
  By definition of the sets $\mathbf S_{\mathbf r}^{\mathbf s}$, the walk $\omega$ belongs to the model.
\end{proof}

The condition for a graph to have an Eulerian path can be encoded as a system of linear constraints on the in-degrees and
out-degrees of the vertices of the graph~\cite{bollobas98}. In our case these are linear equations for the
variables~$a_{\mathbf u}$.
It can also be encoded into linear equations that the Eulerian path should start or end at prescribed vertices. 

A walk $\omega$ ends in $\mathbb{Z}_{\geq 0}^p\times \mathbb{Z}^q$ iff for its associated vector
$(a_{\mathbf u})_{\mathbf u\in\mathbf S}$ we have
\begin{equation}\label{eq:10}
\sum_{(u_1,\dots,u_{p+q})\in\mathbf S} a_s u_i\geq 0\quad\text{for all $i=1,\dots,p$,}
\end{equation}
and it stays entirely in $\mathbb{Z}_{\geq 0}^p\times\mathbb{Z}^q$ iff these inequalities hold for all prefixes
of~$\omega$.
In Def.~2 of~\cite{bostan16a}, the dimension of a homogeneous model for $\mathbb{Z}_{\geq 0}^p\times \mathbb{Z}^q$ was
defined as the smallest $\delta\in\set N$ such that there are $\delta$ inequalities in \eqref{eq:10} which imply all the others.
In view of Lemma~\ref{lem:4}, we define more generally the dimension of an inhomogeneous models for
$\mathbb{Z}_{\geq 0}^p\times\mathbb{Z}^q$ as the smallest $\delta\in\set N$ such that there are $\delta$ inequalities in~\eqref{eq:10}
which together with the linear equations encoding the existence of an Eulerian path in the graph $G$ of Lemma~\ref{lem:4}
imply the remaining inequalities. 
Like in~\cite{bostan16a}, the dimension of a model can now be determined by linear programming.

\section{Inhomogeneous Lattice Walks in the Quarter Plane}\label{sec:4}

We have no satisfactory theory for inhomogeneous models for the quarter plane.
However, for all purely time-inhomogeneous and all purely space-inhomogeneous models
with $k=1$ and small steps, we have produced an experimental classification which is 
available at \url{http://www.algebra.uni-linz.ac.at/people/mkauers/inhomogeneous/}.
Up to symmetry, there are 32993 pairs $\mathbf S_0,\mathbf S_1\subseteq\diagr{N,E,S,W,NE,SE,NW,SW}$.
Removing trivial cases (whose counting sequence is ultimately constant),
zero- and one-dimensional cases (whose generating function is algebraic by Thm.~\ref{thm:1}),
and homogeneous cases (whose nature is known)
leaves us with 23906 space-inhomogeneous and 25370 time-inhomogeneous cases.

For each of these, we computed the first 10000 terms (modulo the prime 45007)
of the generating function $F(1,1,t)$ and tried to guess a differential equation.
When an equation was found, we also searched for an algebraic equation.
These computations took altogether about 30 years of computing time. 
As a result, 3784 space-inhomogeneous models and 2603 time-inhomogeneous models seem to be D-finite,
including 2474 and 1535 seemingly algebraic cases, respectively.
For space-inhomogeneous models, the largest differential equations we found have order 24 and degree~1183,
such an equation appears for example for $\mathbf S_0=\diagr{SW,N,NE}$ and $\mathbf S_1=\diagr{SW,S,NE,E,SE}$. 
For time-inhomogeneous models, the largest differential equation we found has order 28 and degree~1256
and appears for $\mathbf S_0=\diagr{N,S,W,E}$, $\mathbf S_1=\diagr{W,N,E,SE,SW}$.
In view of these sizes, it is likely that some further D-finite models could be discovered with more terms.

The techniques of~\cite{bousquet10} for proving D-finiteness seem to apply only to a very limited number of
cases. We conclude with two examples where they work and invite our readers to find proofs for further
conjecturally D-finite cases. 

\begin{example}
  Consider the time-inhomogeneous model in $\set Z_{\geq0}^2$ with $\mathbf{S}_0 = \diagr{NE,NW,S}$ and
  $\mathbf{S}_1 = \diagr{N,W,E,SW,SE}$. Let $S_0,S_1\in\mathbb{Q}[x,x^{-1},y,y^{-1}]$ be the corresponding Laurent polynomials,
and let $F_0(x,y),F_1(x,y)\in\mathbb{Q}[x,y][[t]]$ be the power series
counting the number of walks of even and odd lengths, respectively. Then $F(x,y)
= F_0(x,y) + F_1(x,y)$ is the generating function of the model.
 The functional equations for $F_0(x,y)$ and $F_1(x,y)$ are
\begin{align*}
F_0(x,y) &= 1 + t S_1 F_1(x,y) - t([y^{<0}]S_1)F_1(x,0)- t([x^{<0}]S_1)F_1(0,y) + t([x^{<0}y^{<0}])F_1(0,0)\\
 F_1(x,y) &= tS_0 F_0(x,y) - t([y^{<0}]S_0)F_0(x,0).
\end{align*}
  Following Bousquet-M\'elou and Mishna~\cite{bousquet10}, we consider
  the groups $G_0$ and $G_1$ generated by the rational maps $\Phi_0\colon (x,y)\mapsto (\frac1x,y)$ and
  $\Psi_0\colon (x,y)\mapsto(x,\frac{1}{y(x+\frac1x)})$, 
  and $\Phi_1\colon (x,y)\mapsto (\frac1x,y)$ and
  $\Psi_1\colon (x,y)\mapsto(x,\frac1y (x+\frac1x))$, respectively. 
  We multiply the two equations above by $xy$ and take the so-called
  orbit sum of the first equation with respect to $G_1$, and of the second one with respect to~$G_0$.
  This eliminates all terms $F_{\mathbf r}(x,0)$, $F_{\mathbf r}(0,y)$ and $F_{\mathbf{r}}(0,0)$
  with $\mathbf{r}\in\{0,1\}$ and leaves us with
  \begin{alignat*}1
    \sum_{g\in G_1} \sgn(g) g(xyF_0(x,y)) &= \sum_{g\in G_1} \sgn(g) g(xy) + t S_1 \sum_{g\in G_1} \sgn(g) g(xyF_1(x,y))\\
    \sum_{g\in G_0} \sgn(g) g(xyF_1(x,y)) &= t S_0 \sum_{g\in G_0} \sgn(g) g(xyF_0(x,y)).
  \end{alignat*}
  It is easy to check that replacing $y$ by $\frac{1}{y}(x+\frac1x)$ in the second equation gives
  \[
   \sum_{g\in G_1} \sgn(g) g(xyF_1(x,y)) = t S_0(x,\frac{1}{y}(x+\frac1x)) \sum_{g\in G_1} \sgn(g) g(xyF_0(x,y)).
  \]
  From this equation and the first of the two previous two equations we get
    \begin{equation*}
    \sum_{g \in G_1} \sgn(g) g(xyF_0(x,y)) = 
    \frac{1}{1-t^2 S_0(x,\frac{1}{y}(x+\frac1x)) S_1} \sum_{g\in G_1} \sgn(g)g(xy)
  \end{equation*}
  Extracting the positive part gives
  \begin{equation*}
    F_0(x,y) = \frac{1}{xy}[x^{>0}y^{>0}]  \frac{1}{1-t^2 S_0(x,\frac{1}{y}(x+\frac1x)) S_1} \sum_{g\in G_1} \sgn(g) g(xy).
  \end{equation*}
  This expression implies D-finiteness of $F_0(x,y)$, and back-substituting into the earlier equations and using
  D-finite closure properties gives the D-finiteness of $F_1(x,y)$ and of $F(x,y)$.
\end{example}

\begin{example}
  Consider the space-inhomogeneous model in $\set Z_{\geq0}^2$
  with $\mathbf{S}_0=\diagr{N,S,E,W}$
  and $\mathbf{S}_1=\diagr{N,S,E,W,SE,SW,NE,NW}$
  studied by D'Arco et al.~\cite{darco16}.
  Define $\mathbf{S}_{\mathbf r}^{\mathbf s}\subseteq\mathbf{S}_{\mathbf r}$ for $\mathbf r,\mathbf s\in\{0,1\}$
  as in Section~\ref{sec:2},
  and let $S_{\mathbf r}^{\mathbf s}\in\set Q[x,x^{-1},y,y^{-1}]$ be the corresponding Laurent polynomials. 
  Write $F_{\mathbf r}(x,y)$ for $\mathbf r\in\{0,1\}$ for the power series that counts the number of walks
  ending at points $(i,j)$ with $i+j=\mathbf r\bmod 2$. 
  The functional equations for $F_0(x,y)$ and $F_1(x,y)$ are
  \begin{alignat*}1
     F_0(x,y) &= 1 + t S_1^0 F_1(x,y) - t([y^{<0}]S_1^0) F_1(x,0) - t([x^{<0}]S_1^0) F_1(0,y)\\ 
     F_1(x,y) &= t S_0^1 F_0(x,y) - t ([y^{<0}]S_0^1) F_0(x,0) - t([x^{<1}]S_0^1) F_0(0,y) \\
     &\quad{} + t S_1^1 F_1(x,y) - t ([y^{<0}]S_1^1) F_1(x,0) - t([x^{<1}]S_1^1) F_1(0,y).
  \end{alignat*}
  Consider the group $G$ generated by the rational maps $\Phi\colon (x,y)\mapsto (x,\frac1y)$ and
  $\Psi\colon (x,y)\mapsto(\frac1x,y)$.
  Like in the previous example, multiply both equations by $xy$ and take the so-called
  orbit sum for this group.
  This eliminates all terms $F_{\mathbf r}(x,0)$ and $F_{\mathbf r}(0,y)$ and leaves us with
  \begin{alignat*}2
    \sum_{g\in G} \sgn(g) g(xyF_0(x,y)) &= \sum_{g\in G} \sgn(g) g(xy) + tS_1^0 \sum_{g\in G} \sgn(g) g(xyF_1(x,y))\\
    \sum_{g\in G} \sgn(g) g(xyF_1(x,y)) &= tS_0^1 \sum_{g\in G} \sgn(g) g(xyF_0(x,y)) + tS_1^1 \sum_{g\in G} \sgn(g) g(xyF_1(x,y)).
  \end{alignat*}
  From those equations we deduce 
  \begin{equation*}
    \sum_{g \in G} \sgn(g) g(xyF_1(x,y)) = \frac{tS_0^1}{1-tS_1^1-t^2S_0^1S_1^0} \sum_{g\in G} \sgn(g)g(xy),
  \end{equation*}
  and extracting the positive part gives
  \begin{equation*}
    F_1(x,y) = \frac1{xy}[x^{>0}y^{>0}] \frac{tS_0}{1-tS_1^1-t^2S_0^1S_1^0} \sum_{g\in G} \sgn(g) g(xy).
  \end{equation*}
  This expression implies D-finiteness of $F_1(x,y)$, and back-substituting into the earlier equations
  and using D-finite closure properties gives the D-finiteness of $F_0(x,y)$ and of the full
  generating function $F(x,y)=F_0(x,y)+F_1(x,y)$.
 \end{example}

\printbibliography

\end{document}